\newcommand{\D}{\mathbb D}
\newcommand{\C}{\mathbb C}
\newcommand{\T}{\mathbb T}
\newtheorem{theorem}{Theorem}
\newtheorem{lemma}[theorem]{Lemma}
\theoremstyle{remark}
\title{Rigidity of weighted composition operators on $H^p$}
\author{Mikael Lindstr\"{o}m}
\address[Lindstr\"{o}m]{Department of Mathematics, \r{A}bo Akademi University, FI-20500 \r{A}bo,
Finland}
\email{mikael.lindstrom@abo.fi}
\author{Santeri Miihkinen}
\address[Miihkinen]{Department of Mathematics, \r{A}bo Akademi University, FI-20500 \r{A}bo,
Finland}
\email{santeri.miihkinen@abo.fi}
\author{Pekka J.\ Nieminen}
\address[Nieminen]{Department of Mathematics and Statistics,
FI-20014 University of Turku, Finland}
\email{pekka.nieminen@utu.fi}
\subjclass[2010]{47B33, 47B10}
\keywords{Hardy space,  strict singularity, weighted composition operator}
\date{13 September 2018}
\begin{document}

\begin{abstract}
We show that every non-compact weighted composition operator
$f \mapsto u\cdot (f\circ\phi)$ acting on a Hardy space $H^p$ for
$1 \leq p < \infty$ fixes an isomorphic copy of the sequence space
$\ell^p$ and therefore fails to be strictly singular. We also
characterize those weighted composition operators on $H^p$ which
fix a copy of the Hilbert space $\ell^2$. These results extend
earlier ones obtained for unweighted composition operators.
\end{abstract}

\maketitle

\section{Introduction and main results}

Let $\D$ be the open unit disc in the complex plane $\C$ and fix analytic
maps $u\colon \D \to \C$ and $\phi\colon \D \to \D$. The
weighted composition operator $uC_\phi$ is defined by
\[
   (uC_\phi) f = u \cdot (f\circ\phi)
\]
for $f\colon \D \to \C$ analytic. Boundedness and compactness
properties of such operators acting on the classical Hardy spaces
$H^p$ were characterized in terms of Carleson measures in
\cite{CHD01,CHD03} (see also \cite{CZ}). An obvious necessary
condition for the boundedness of $uC_\phi\colon H^p \to H^q$
is that $u = (uC_\phi)1 \in H^q$.

The purpose of this work is to study
the qualitative properties of non-compact weighted composition
operators on the Hardy spaces
$H^p$, extending the results obtained in \cite{LNST} for unweighted
composition operators. It turns out that the weighted composition
operators exhibit the exact same rigidity phenomena as the unweighted
ones. We also refer the reader to the recent parallel work \cite{MNST}
in the context of Volterra-type integral operators, where some of
our ideas originate from. 

Recall that if $X$ is a Banach space
and $T\colon X \to X$ is a linear operator, then $T$ is called
\emph{strictly singular} if the restriction of $T$ to any 
infinite-dimensional subspace of $X$ is not an isomorphism
(equivalently, it is not bounded below).

Our first result is a generalization of \cite[Thm~1.2]{LNST}
and shows, in particular, that the notions of compactness and strict
singularity
coincide for weighted composition operators on $H^p$. Here we employ
the usual test functions
\[
   g_a(z) = \frac{(1-|a|^2)^{1/p}}{(1-\bar{a}z)^{1/p}},
   \qquad z \in \D,
\]
where $a \in \D$. They always satisfy $\|g_a\|_{H^p} = 1$.

\begin{theorem} \label{thm:ellp}
Let $1 \leq p < \infty$ and suppose that $uC_\phi$ is bounded
and non-compact $H^p \to H^p$. Then $uC_\phi$ fixes an isomorphic copy
of $\ell^p$ in $H^p$. More precisely, there exists a sequence
$(a_n)$ in $\D$ such that $(g_{a_n})$ is equivalent to the natural
basis of $\ell^p$ and $uC_\phi$ is bounded below on the closed
linear span of $(g_{a_n})$.
\end{theorem}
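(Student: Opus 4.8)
The plan is to route everything through the pull-back measure associated with $uC_\phi$ and to reduce the statement to an almost-disjointness estimate for the $g_a$ with respect to that measure. First I would record the identity $\|uC_\phi f\|_{H^p}^p = \int_{\overline{\D}} |f|^p\,d\mu$, valid for all $f \in H^p$, where $\mu$ is the push-forward of $|u|^p\,dm$ under the boundary map of $\phi$. By the Carleson-measure descriptions of boundedness and compactness recalled from \cite{CHD01,CHD03,CZ}, the hypotheses say precisely that $\mu$ is a Carleson measure that is not a vanishing one. Hence there are $\delta > 0$ and points $a_n \in \D$ with $|a_n| \to 1$ and $\mu(S(a_n)) \geq \delta\,(1-|a_n|)$, where $S(a)$ is the Carleson box with top $a$. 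Since the normalization concentrates the $H^p$-mass of $g_a$ in $S(a)$ and forces $|g_a(w)|^p \gtrsim (1-|a|)^{-1}$ there, this already gives $\|uC_\phi g_{a_n}\|_{H^p}^p \geq \int_{S(a_n)} |g_{a_n}|^p\,d\mu \gtrsim \delta$, so the images are uniformly bounded below in norm.

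Next I would thin out the sequence. Passing to a subsequence, still written $(a_n)$, I would demand that it be so sparse that the boxes $S(a_n)$ are strongly separated: either pairwise far apart, or, in case the $a_n$ accumulate at a single boundary point, nested with $1-|a_{n+1}| \ll 1-|a_n|$. This sparseness is meant to secure two things at once. The first is that $(g_{a_n})$ is equivalent to the unit vector basis of $\ell^p$ in $H^p$; this is the familiar principle that normalized reproducing-type test functions sitting over a sufficiently separated sequence behave in $L^p(\T)$ like disjointly supported functions, and it can be taken over from the argument for the unweighted case in \cite{LNST}. The second, and decisive, requirement is that the $g_{a_n}$ be almost disjoint with respect to $\mu$.

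The heart of the matter is the lower estimate
\[
   \Big\| uC_\phi \sum_n c_n g_{a_n} \Big\|_{H^p}^p
   = \int_{\overline{\D}} \Big| \sum_n c_n g_{a_n} \Big|^p \, d\mu
   \gtrsim \delta \sum_n |c_n|^p .
\]
To prove it I would localize to pairwise disjoint regions $R_m$ — in the nested picture, the box $S(a_m)$ with its successor sub-box removed — on which $g_{a_m}$ dominates every other $g_{a_n}$ and which still carry $\mu$-mass of order $1-|a_m|$. Here the Carleson upper bound plays a double role: it forces $\mu(S(a_{m+1})) \lesssim 1-|a_{m+1}| \ll 1-|a_m|$, so the $\mu$-mass cannot escape into the tiny inner sub-box and the annular region $R_m$ retains the mass handed to us by non-compactness; and it controls the off-diagonal integrals $\int_{R_m} |g_{a_n}|^p\,d\mu$ for $n \neq m$ through the decay of $g_{a_n}$ away from its own box. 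On each $R_m$ the diagonal term then contributes $\gtrsim |c_m|^p\,\delta$ while the tail is a small fraction of it, and summing over $m$ gives the displayed bound. Combined with the $\ell^p$-equivalence of $(g_{a_n})$, this shows $uC_\phi$ is bounded below on their closed linear span, which is the assertion.

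The step I expect to fight hardest with is this $L^p(\mu)$ almost-disjointness estimate. Non-compactness hands over boxes with a mass lower bound but no pointwise information about $\mu$, so all off-diagonal control has to be extracted from the Carleson condition alone, used simultaneously as a decay bound that kills the interference terms and as the anti-concentration device that keeps the diagonal mass inside the annular regions $R_m$. Balancing these against the speed $1-|a_{n+1}| \ll 1-|a_n|$, uniformly over all scalar sequences $(c_n)$, is the technical core; the construction of the separated subsequence must be carried out so that both the $\ell^p$-equivalence in $L^p(\T)$ and the almost-disjointness in $L^p(\mu)$ hold for one and the same sequence.
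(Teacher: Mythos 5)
Your opening moves are sound: the pull-back identity, the use of the non-vanishing Carleson condition to produce points with $\mu(S(a_n)) \geq \delta(1-|a_n|)$, and the resulting uniform lower bound $\|(uC_\phi)g_{a_n}\|_{H^p}^p \gtrsim \delta$ are all correct and parallel the paper's starting point. The genuine gap is in the almost-disjointness step, in two places. First, on $R_m = S(a_m)\setminus S(a_{m+1})$ it is \emph{false} that $g_{a_m}$ dominates $g_{a_{m+1}}$: at points of $R_m$ just outside $S(a_{m+1})$ one has $|g_{a_{m+1}}|^p \asymp (1-|a_{m+1}|)^{-1}$, which is enormously \emph{larger} than the diagonal size $(1-|a_m|)^{-1}$, and sparseness only makes this worse. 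Nor does the Carleson condition rescue the integrated version: it permits $\mu$ to carry mass of order $1-|a_{m+1}|$ in the first dyadic annulus around $S(a_{m+1})$, where $|g_{a_{m+1}}|^p \asymp (1-|a_{m+1}|)^{-1}$, so $\int_{R_m}|g_{a_{m+1}}|^p\,d\mu$ can be of the order of the Carleson constant $C_\mu$ of $\mu$, i.e.\ comparable to or larger than the diagonal contribution $\delta$. What the Carleson condition actually yields, by summing dyadic annuli, is $\int_{\overline{\D}\setminus KS(a_n)}|g_{a_n}|^p\,d\mu \lesssim C_\mu/K$ for the $K$-fold \emph{enlarged} box: off-diagonal smallness comes only from enlargement, and since these errors must in the end be summable in $n$ (see below), the enlargement factors must grow with $n$ and $a_{n+1}$ must be chosen after them --- an inductive gliding-hump construction, not a single sparseness condition imposed once on the whole sequence.

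Second, your assembly --- on each $R_m$ the diagonal contributes $\gtrsim |c_m|^p\delta$ while ``the tail is a small fraction of it'' --- cannot hold uniformly over scalar sequences: take $c_m = 0$ and $c_n \neq 0$ for $n \neq m$, and the tail is not a small fraction of a zero diagonal. Region-by-region relative domination is the wrong notion; the correct mechanism is the principle of small perturbations. Setting $h_n = (uC_\phi)g_{a_n}$ and $\tilde h_n = h_n\chi_{B_n}$ with $B_n = (\phi^*)^{-1}(R_n) \subset \T$ pairwise disjoint, the sequence $(\tilde h_n)$ is disjointly supported in $L^p(\T)$ with norms bounded above and below, hence equivalent to the $\ell^p$-basis, and $(h_n)$ inherits this \emph{provided} $\sum_n \|h_n - \tilde h_n\|_{L^p} = \sum_n \bigl(\int_{\overline{\D}\setminus R_n}|g_{a_n}|^p\,d\mu\bigr)^{1/p}$ is small enough; this summability over $n$ is exactly what a fixed enlargement constant cannot deliver, and what your regions $R_n \subset S(a_n)$ (which leave out the order-one mass just outside $S(a_n)$) never satisfy. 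With these two repairs your route does go through, and it is genuinely different bookkeeping from the paper's: the paper avoids Carleson-box geometry for the off-diagonal decay altogether, because in Lemma~\ref{le:Hump} the outward tails are killed by the explicit factor $1-|a|^2 \to 0$ (uniformly where $|1-\bar a\phi|$ is bounded below) and the inward masses by absolute continuity of $F \mapsto \int_F |(uC_\phi)g_a|^p\,dm$; those two limits supply precisely the summable smallness your sketch is missing.
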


We next determine under which conditions a weighted composition
operator on $H^p$ with $p\neq 2$ fixes a copy of the Hilbert space
$\ell^2$. In the unweighted case (see \cite[Thm~1.4]{LNST}) this
is the case precisely when the boundary contact set
\[
   E_\phi = \{\zeta \in\T : |\phi(\zeta)|=1\}
\]
has positive measure. It turns out that this result holds in the
weighted case as well. The first half is established in the
following theorem, where we also allow for the possibility that
the target space of the operator is a larger Hardy space than the
domain.

\begin{theorem}\label{thm:ell2}
Let $1 \leq q \leq p < \infty$ and suppose that $uC_\phi$ is bounded
$H^p \to H^q$. If $u \neq 0$ and $m(E_\phi) > 0$, then $uC_\phi$
fixes an isomorphic copy of $\ell^2$ in $H^p$.
\end{theorem}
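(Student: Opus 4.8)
The plan is to reduce the problem to a single boundary estimate and then to build the copy of $\ell^2$ by hand out of lacunary monomials. Since $u = (uC_\phi)1 \in H^q$ is a nonzero analytic function, $|u^*| > 0$ a.e.\ on $\T$, and together with $m(E_\phi) > 0$ this will yield a genuine lower bound. For a polynomial $f$ the boundary values satisfy $|(uC_\phi f)^*| = |u^*|\,|f(\phi^*)|$ a.e., so restricting the integral defining the $H^q$-norm to $E_\phi$ gives
\[
   \|uC_\phi f\|_{H^q}^q \;\ge\; \int_{E_\phi} |u^*|^q\,|f(\phi^*)|^q\,dm .
\]
I would encode the right-hand side through the pushforward measure $\nu$ on $\T$ defined by $\nu(A) = \int_{E_\phi} \mathbf 1_A(\phi^*)\,|u^*|^q\,dm$, so that $\int_\T |f|^q\,d\nu$ equals the integral above for every polynomial $f$ (no boundary subtleties arise here, as polynomials are continuous on $\overline{\D}$). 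The decisive input is that $\nu$ is \emph{absolutely continuous} with respect to $m$; this is the analytic rigidity underlying the theorem and is inherited from the unweighted case, since the pushforward of $\mathbf 1_{E_\phi}\,m$ under $\phi^*$ is absolutely continuous and weighting by $|u^*|^q$ preserves this. As $\nu(\T) = \int_{E_\phi} |u^*|^q\,dm > 0$, its density $w = d\nu/dm$ satisfies $w \ge \delta$ on a set $G \subseteq \T$ with $m(G) > 0$ for some $\delta > 0$, whence $\|uC_\phi f\|_{H^q}^q \ge \delta\,\|f\|_{L^q(G)}^q$. It then remains to find a copy of $\ell^2$ in $H^p$ on which $\|f\|_{L^q(G)} \gtrsim \|f\|_{H^p}$.

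For this I would use lacunary monomials. Choosing a rapidly increasing sequence $(n_k)$ and writing $f = \sum_k c_k z^{n_k}$, Paley's theorem gives $\|f\|_{H^p} = \|f\|_{L^p(\T)} \approx (\sum_k |c_k|^2)^{1/2}$, so $Y = \overline{\operatorname{span}}\{z^{n_k}\}$ is isomorphic to $\ell^2$ in $H^p$; the same equivalence holds in every $L^r(\T)$, which I will invoke for $r = 4$. The purpose of fast growth is that the Gram matrix of $(z^{n_k})$ in $L^2(G)$ has diagonal entries $m(G)$ and off-diagonal entries $\widehat{\mathbf 1_G}(n_j - n_k)$, which tend to $0$ by the Riemann--Lebesgue lemma; selecting $n_{k+1}$ inductively so large that these contributions are summably small makes the Gram matrix a small perturbation of $m(G)\,I$ (Schur test), so $(z^{n_k})$ is a Riesz sequence in $L^2(G)$ and $\|f\|_{L^2(G)}^2 \approx m(G)\sum_k |c_k|^2$.

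Finally I would pass from $L^2(G)$ to $L^q(G)$ for the given $q \le p$. When $q \ge 2$ this is immediate, since $m(G) \le 1$ forces $\|f\|_{L^q(G)} \ge \|f\|_{L^2(G)}$. When $q < 2$ I would interpolate: log-convexity of the $L^r(G)$-norms gives $\|f\|_{L^2(G)} \le \|f\|_{L^q(G)}^\theta\,\|f\|_{L^4(G)}^{1-\theta}$ with $\tfrac12 = \tfrac{\theta}{q} + \tfrac{1-\theta}{4}$, while $\|f\|_{L^4(G)} \le \|f\|_{L^4(\T)} \approx (\sum_k |c_k|^2)^{1/2}$ by Paley; combining these with the Riesz lower bound yields $\|f\|_{L^q(G)} \gtrsim (\sum_k |c_k|^2)^{1/2} \approx \|f\|_{H^p}$. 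Hence $\|uC_\phi f\|_{H^q} \gtrsim \|f\|_{H^p}$ first for polynomials and then, by continuity, for all $f \in Y$, so $uC_\phi$ is bounded below on $Y \cong \ell^2$. The main obstacle is the reduction step: establishing (or correctly invoking) the absolute continuity of the contact measure $\nu$, which rules out that $\nu$ is carried by an $m$-null set and is precisely where the analyticity of $\phi$ is indispensable; the uniform lower bound for $q < 2$ is the secondary technical point.
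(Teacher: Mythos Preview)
Your argument is correct, but it takes a different organizational route from the paper. The paper invokes the unweighted bound $\bigl\|\sum_k \alpha_k \phi^{n_k}\bigr\|_{H^1} \geq c_1\|(\alpha_k)\|_2$ from \cite[Prop.~3.2]{LNST} as a black box and then treats the weight $u$ by a short perturbation: since $|u^*|>0$ a.e., one chooses a set $F$ with $m(\T\setminus F)<\epsilon$ on which $|u^*|\ge c_2$, bounds $\int_{\T\setminus F}\bigl|\sum_k\alpha_k\phi^{n_k}\bigr|$ via H\"older and the boundedness of $C_\phi$ on $H^2$, and combines. Everything is done in $H^1$, with $H^q\hookrightarrow H^1$ supplying the final step. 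Your route instead bakes the weight into a pushforward measure $\nu$ on $\T$, invokes its absolute continuity to find a set $G$ where $d\nu/dm\ge\delta$, builds the lacunary Riesz sequence in $L^2(G)$ by hand via Riemann--Lebesgue, and then interpolates against $L^4(\T)$ to reach $L^q(G)$ when $q<2$. In effect you are re-deriving the content of \cite[Prop.~3.2]{LNST} with the weight already present, whereas the paper cleanly separates the unweighted estimate from the weight. Your approach makes the role of the contact measure and its absolute continuity explicit and works directly in $H^q$; the paper's approach is considerably shorter, since once the unweighted result is in hand the weight is dispatched in a few lines with no need for the Riesz-sequence construction or the $L^q$--$L^2$--$L^4$ interpolation.
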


In the converse direction we have the following result.

\begin{theorem}\label{thm:ell2conv}
Let $1 \leq p < \infty$ and suppose that $uC_\phi$ is bounded
$H^p \to H^p$ with $m(E_\phi) = 0$. If $uC_\phi$ is bounded below on
an infinite-dimensional subspace $M \subset H^p$, then $M$ contains
an isomorphic copy of $\ell^p$. In particular, if $p \neq 2$,
then $uC_\phi$ does not fix any isomorphic copies of $\ell^2$ in
$H^p$.
\end{theorem}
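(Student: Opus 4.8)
The plan is to use the hypothesis $m(E_\phi)=0$ to realize $uC_\phi|_M$ as an isomorphism into an $L^p$ space living on the open disc, and then to run a disjointification argument there. Let $\nu$ be the finite positive Borel measure on $\overline{\D}$ given by the pushforward $\nu(A)=\int_{\phi^{-1}(A)}|u|^p\,dm$, so that $\|uC_\phi f\|_{H^p}^p=\int_{\overline{\D}}|f|^p\,d\nu$ for every $f\in H^p$. Since $\nu(\T)=m(E_\phi)=0$, the measure $\nu$ is carried by $\D$. Regarding the identity $J\colon f\mapsto f$ as a bounded operator $H^p\to L^p(\nu)$, the assumption that $uC_\phi$ is bounded below on $M$ (say $\|uC_\phi f\|_{H^p}\ge c\|f\|_{H^p}$ for $f\in M$ with some $c>0$) says exactly that $J|_M$ is bounded below, hence an isomorphism onto $J(M)$. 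It therefore suffices to produce a sequence in $M$ whose image under $J$ is equivalent to the unit vector basis of $\ell^p$ in $L^p(\nu)$; transporting it back through $J|_M$ yields a copy of $\ell^p$ inside $M$, because $H^p$-norms of linear combinations from $M$ are comparable to their $L^p(\nu)$-norms.

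First I would extract from $M$ a normalized weakly null sequence $(f_n)$. For $1<p<\infty$ this is immediate, as $H^p$ is reflexive and $M$ is infinite-dimensional. For $p=1$ one argues by cases: either $M$ already contains a copy of $\ell^1$ and there is nothing to prove, or $M$ contains no copy of $\ell^1$, in which case Rosenthal's theorem furnishes a weakly Cauchy sequence whose consecutive differences, after normalization, form a weakly null normalized sequence in $M$. In every case, since point evaluations at interior points belong to $(H^p)^*$, weak convergence forces $f_n(z)\to 0$ for each $z\in\D$, and because $\nu$ lives on $\D$ we obtain $f_n\to 0$ $\nu$-almost everywhere. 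Moreover the $L^p(\nu)$-norms $\|f_n\|_{L^p(\nu)}=\|uC_\phi f_n\|_{H^p}$ lie in $[c,\|uC_\phi\|]$, so the sequence is bounded above and bounded away from zero in $L^p(\nu)$.

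The core is then a standard disjointification in $L^p(\nu)$. By the Kadec--Pe\l czy\'nski subsequence splitting lemma, after passing to a subsequence we may write $f_n=g_n+h_n$ in $L^p(\nu)$ with $(|g_n|^p)$ uniformly integrable and the $h_n$ pairwise disjointly supported. The disjointness forces $h_n\to 0$ $\nu$-a.e., whence $g_n\to 0$ $\nu$-a.e., and Vitali's theorem combined with uniform integrability gives $\|g_n\|_{L^p(\nu)}\to 0$. Thus $(f_n)$ is an arbitrarily small perturbation of the disjointly supported sequence $(h_n)$, whose norms are bounded on both sides; such disjointly supported sequences are equivalent to the $\ell^p$ basis, so by the small-perturbation principle a suitable subsequence of $(f_n)$ is equivalent to the $\ell^p$ basis in $L^p(\nu)$. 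Transporting this subsequence back through $J|_M$ produces the desired $\ell^p$ copy in $M$. The final assertion follows at once: every infinite-dimensional subspace of $\ell^2$ is again isomorphic to $\ell^2$, and $\ell^2$ contains no isomorphic copy of $\ell^p$ for $p\neq 2$, so no subspace on which $uC_\phi$ is bounded below can be isomorphic to $\ell^2$. I expect the genuine obstacles to be the case $p=1$, where reflexivity fails and Rosenthal's dichotomy is needed to manufacture the weakly null sequence, together with the careful passage from weak convergence to $\nu$-a.e.\ convergence; the disjointification itself is routine once these are in place.
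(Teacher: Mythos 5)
Your proposal is correct, but it takes a genuinely different route from the paper's. The paper stays on the boundary circle: it picks normalized $f_n \in M$ with $f_n(0) = f_n'(0) = \cdots = f_n^{(n)}(0) = 0$, so that $f_n \to 0$ uniformly on compact subsets of $\D$ (no weak-null extraction is needed at all), introduces the sets $E_k = \{\zeta\in\T : |\phi(\zeta)| > 1 - \tfrac{1}{k}\}$ with $m(E_k) \to m(E_\phi) = 0$, and runs a gliding-hump argument on the two limits $\lim_{k}\int_{E_k}|(uC_\phi)f_n|^p\,dm = 0$ and $\lim_{n}\int_{\T\setminus E_k}|(uC_\phi)f_n|^p\,dm = 0$, producing a subsequence whose images $(uC_\phi)f_{n_j}$ are almost disjointly supported in $L^p(\T,m)$ and hence equivalent to the $\ell^p$ basis; bounded-belowness then pulls the $\ell^p$-copy back into $M$, exactly as in your final step. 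You instead transfer the problem to the Carleson measure $\nu$ on the disc --- legitimate precisely because $m(E_\phi)=0$ gives $\nu(\T)=\int_{E_\phi}|u|^p\,dm=0$ and makes the boundary values of $f\circ\phi$ equal to $f(\phi(\zeta))$ a.e.\ --- and then invoke heavier but standard machinery: reflexivity (or Rosenthal's theorem when $p=1$) to get a normalized weakly null sequence, the Kadec--Pe\l czy\'nski subsequence splitting lemma, Vitali's theorem, and the small-perturbation principle. Your route buys conceptual clarity: the theorem becomes the statement that a finite measure carried by the open disc cannot support any structure other than $\ell^p$ on sequences tending to zero pointwise. The paper's route buys elementarity and uniformity: choosing $f_n$ by killing Taylor coefficients sidesteps weak compactness entirely, so there is no case distinction at $p=1$ and no appeal to Rosenthal or Kadec--Pe\l czy\'nski. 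One detail to tighten in your $p=1$ case: consecutive differences of a weakly Cauchy sequence may tend to zero in norm, and normalizing a norm-null sequence can destroy weak nullity; you should start from a normalized sequence in $M$ having no norm-convergent subsequence, note that its weakly Cauchy subsequence is then not norm-Cauchy, and take \emph{skipped} differences $x_{n_k}-x_{m_k}$ that are bounded below in norm before normalizing. This is a routine fix and does not affect the architecture of your argument.
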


The last statement of the theorem is due to the fact that
$\ell^p$ and $\ell^2$ are totally incomparable spaces for $p \neq 2$.

\section{Proofs}

Towards the proof of Theorem~\ref{thm:ellp} we first state the
following lemma.

\begin{lemma}\label{le:Hump}
Let $u \in H^p$ and $\phi\colon \D \to \D$ be analytic. For
$\epsilon > 0$, define
$$F_\epsilon = \{\zeta\in\T: |\phi(\zeta)-1| < \epsilon\}.$$ Then
\begin{gather*}
   \lim_{a\to 1} \int_{\T\setminus F_\epsilon}
     |(uC_\phi)g_a|^p\,dm = 0 \quad\text{for each $\epsilon > 0$,} \\
   \lim_{\epsilon\to 0} \int_{F_\epsilon}
     |(uC_\phi)g_a|^p\,dm = 0 \quad\text{for each $a \in \D$.} \\
\end{gather*}
\end{lemma}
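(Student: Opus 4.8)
The plan is to reduce both assertions to pointwise estimates on $|g_a\circ\phi|^p$, since $|(uC_\phi)g_a|^p = |u|^p\,|g_a\circ\phi|^p$ and, on the unit circle, the test functions obey the explicit identity $|g_a(w)|^p = (1-|a|^2)/|1-\bar a w|$. Because $u\in H^p$ has boundary values in $L^p(\T)$, any uniform or dominated bound on $|g_a\circ\phi|^p$ will transfer to the integrals through $\|u\|_{H^p}^p=\int_\T|u|^p\,dm$.

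For the first limit, fix $\epsilon>0$ and restrict to $\zeta\in\T\setminus F_\epsilon$, where $|\phi(\zeta)-1|\ge\epsilon$. Since $|\phi(\zeta)|\le 1$ a.e., the reverse triangle inequality gives $|1-\bar a\phi(\zeta)|\ge|1-\phi(\zeta)|-|\phi(\zeta)|\,|1-\bar a|\ge\epsilon-|1-\bar a|$, so once $a$ is close enough to $1$ that $|1-\bar a|<\epsilon/2$ we obtain the uniform lower bound $|1-\bar a\phi(\zeta)|\ge\epsilon/2$ on $\T\setminus F_\epsilon$. Consequently $|g_a\circ\phi|^p\le 2(1-|a|^2)/\epsilon$ there, and integrating $|u|^p\,|g_a\circ\phi|^p$ yields $\int_{\T\setminus F_\epsilon}|(uC_\phi)g_a|^p\,dm\le (2/\epsilon)(1-|a|^2)\|u\|_{H^p}^p$, which tends to $0$ as $a\to 1$.

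For the second limit, fix $a\in\D$. Here the crude estimate $|1-\bar a\phi|\ge 1-|a|$ gives $|g_a\circ\phi|^p\le(1-|a|^2)/(1-|a|)=1+|a|<2$, so the integrand is dominated on all of $\T$ by the fixed $L^1$ function $2|u|^p$. As $\epsilon\downarrow0$ the sets $F_\epsilon$ decrease to $F_0:=\{\zeta\in\T:\phi(\zeta)=1\}$, so by dominated convergence $\int_{F_\epsilon}|(uC_\phi)g_a|^p\,dm\to\int_{F_0}|(uC_\phi)g_a|^p\,dm$, and it remains to see that this last integral vanishes.

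The main point, and the only genuinely non-elementary input, is that $m(F_0)=0$. If $\phi$ is constant then $\phi\equiv c$ with $|c|<1$, so $c\neq1$ and $F_0=\emptyset$. If $\phi$ is non-constant, then $1-\phi$ is a bounded analytic function on $\D$ that is not identically zero, and its boundary values vanish precisely on $F_0$; by the classical uniqueness theorem for Hardy-space (or Nevanlinna-class) functions, the boundary values of a nonzero such function cannot vanish on a set of positive measure, whence $m(F_0)=0$. The integral over $F_0$ therefore vanishes, completing the argument.
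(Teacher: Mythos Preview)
Your argument is correct and follows essentially the same route as the paper: a uniform lower bound on $|1-\bar a\phi|$ away from $F_\epsilon$ for the first limit, and the fact that $|(uC_\phi)g_a|^p\in L^1(\T)$ together with $m(F_\epsilon)\to m(\{\phi=1\})=0$ for the second. Your version is slightly more explicit in the second part---you exhibit the dominant $2|u|^p$ and justify $m(\{\phi=1\})=0$ via the boundary uniqueness theorem for $1-\phi\in H^\infty$---whereas the paper simply invokes absolute continuity of the measure $F\mapsto\int_F|(uC_\phi)g_a|^p\,dm$ and states $m(\{\phi=1\})=0$ without comment.
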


\begin{proof}
Let $\epsilon > 0$ be fixed and consider
$\zeta \in \T\setminus F_\epsilon$ for which the radial limit
$\phi(\zeta)$ exists. Then, if $|a-1| < \epsilon/2$, we have
\[
   |1-\bar{a}\phi(\zeta)| \geq |1-\phi(\zeta)| - |1-a| > \epsilon/2,
\]
and so
\[
   \int_{\T\setminus F_\epsilon} |(uC_\phi)g_a|^p\,dm
   \leq (1-|a|^2) \int_{\T\setminus F_\epsilon}
        \frac{|u|^p}{|1-\bar{a}\phi|^2}\,dm
   \leq \frac{4(1-|a|^2)}{\epsilon^2} \|u\|_{H^p}^p.
\]
Since this tends to $0$ as $a \to 1$, we obtain the first part of the
lemma.

The second part follows from the absolute continuity of the measure
$F \mapsto \int_F |(uC_\phi)g_a|^p\,dm$ and the fact that
$m(F_\epsilon) \to m(\{\zeta\in\T: \phi(\zeta)=1\}) = 0$ as
$\epsilon \to 0$. Note that $g_a \in H^\infty$ and hence
$(uC_\phi)g_a \in L^p(\T,m)$.
\end{proof}

\begin{proof}[Proof of Theorem~\ref{thm:ellp}]
Since $uC_\phi$ is non-compact, we may find a sequence $(a_n)$ in
$\D$ such that $|a_n| \to 1$ and
$\|(uC_\phi)g_{a_n}\|_{H^p} \geq c > 0$ for all $n$. This is a
consequence of the compactness characterization of $uC_\phi$ in
terms of vanishing Carleson measures; see \cite[Theorem~3.5]{CHD01}.
By passing to a convergent subsequence of $(a_n)$ and utilizing a
suitable rotation, we may assume that $a_n \to 1$.

We now proceed exactly as in the unweighted case (see the proof of
Theorem~1.2 in \cite{LNST} for the details of the following
argument). First, by invoking
Lemma~\ref{le:Hump} repeatedly, we may extract a subsequence of
$(a_n)$, still denoted by $(a_n)$, such that the image sequence
$((uC_\phi)g_{a_n})$ in $H^p$ is equivalent to the standard basis
of $\ell^p$, that is,
\[
   \biggl\|\sum_{n=1}^\infty \alpha_n (uC_\phi)g_{a_n} \biggr\|_{H^p}
   \sim \|(\alpha_n)\|_p
   \quad\text{for $(\alpha_n) \in \ell^p$.}
\]
Then a second application of Lemma~\ref{le:Hump} to the functions
$g_{a_n}$ (taking $u = 1$ and $\phi(z) = z$) produces a further
subsequence of $(a_n)$, which we continue to denote by $(a_n)$,
such that also
 \[
   \biggl\|\sum_{n=1}^\infty \alpha_n g_{a_n} \biggr\|_{H^p}
   \sim \|(\alpha_n)\|_p
   \quad\text{for $(\alpha_n) \in \ell^p$.}
\]
By combining the preceding two norm estimates we see that
$uC_\phi$ restricts to a linear isomorphism on the closed linear
span of $(g_{a_n})$.
\end{proof}

\begin{proof}[Proof of Theorem~\ref{thm:ell2}]
Since $m(E_\phi) > 0$, \cite[Prop.~3.2]{LNST} shows that there exists
a sequence of integers
$(n_k)$ satisfying $\inf_k (n_{k+1}/n_k) > 1$
and a constant $c_1 > 0$ such that
$\bigl\|\sum_k\alpha_k\phi^{n_k}\bigr\|_{H^1}
\geq c_1\|(\alpha_k)\|_2$ for all $(\alpha_k) \in \ell^2$.
Our goal is to prove a weighted version of this estimate,
that is, for some constant $c > 0$,
\begin{equation}\label{eq:weightedH1}
   \biggl\| u \sum_k \alpha_k \phi^{n_k} \biggr\|_{H^1}
   \geq c \|(\alpha_k)\|_2.
\end{equation}
Since Paley's theorem (see e.g.\ \cite[p.~104]{Duren}) implies that
the closed linear
span $M = [z^{n_k}: k \geq 1]$ in $H^p$ is isomorphic to
$\ell^2$, inequality \eqref{eq:weightedH1} implies that
$uC_\phi$ is an isomorphism from $M$ into $H^1$. This yields the
theorem because $\|f\|_{H^q} \geq \|f\|_{H^1}$ for all $f \in H^q$.

To establish \eqref{eq:weightedH1}, we first note that
since $u \neq 0$, we have $|u| > 0$ a.e.\ on $\T$. Thus, for
a given $\epsilon > 0$ there
exist a set $F \subset \T$ with
$m(\T\setminus F) < \epsilon$ such that
$|u| > c_2$ on $F$ for some constant $c_2 = c_2(\epsilon) > 0$.
Then, using Hölder's inequality and the boundedness of $C_\phi$
on $H^2$, we get
\[
   \int_{\T\setminus F} \biggl| \sum_k \alpha_k\phi^{n_k} \biggr|\,dm
   \leq \sqrt{m(\T\setminus F)}
        \biggl\|\sum_k \alpha_k\phi^{n_k} \biggr\|_2
   \leq \sqrt{\epsilon} \cdot c_3\|(\alpha_k)\|_2
\]
for some constant $c_3 > 0$. On combining these estimates we obtain
\[
   \biggl\| u \sum_k \alpha_k \phi^{n_k} \biggr\|_{H^1}
   \geq c_2 \int_F \biggl|\sum_k \alpha_k \phi^{n_k} \biggr|\,dm
   \geq c_2 (c_1 - \sqrt{\epsilon}\cdot c_3) \|(\alpha_k)\|_2.
\]
In particular, choosing
$\epsilon = (c_1/2c_3)^2$ here proves \eqref{eq:weightedH1} with
$c = \frac{1}{2} c_2c_1$. This completes the proof of the theorem.
\end{proof}

\begin{proof}[Proof of Theorem~\ref{thm:ell2conv}]
Since $M$ is infinite-dimensional, there exists a sequence $(f_n)$
in $M$ such that $\|f_n\|_{H^p} = 1$ and $f_n \to 0$ uniformly on
compact subsets of $\D$; for instance, we can choose
$f(0) = f'(0) = \cdots = f^{(n)}(0) = 0$ for all $n$.

For each $k \geq 1$, define
$E_k = \{\zeta \in \T: |\phi(\zeta)| > 1-\tfrac{1}{k}\}$. We have
$m(E_k) \to m(E_\phi) = 0$ as $k \to \infty$ and therefore
\begin{equation}\label{eq:limk}
   \lim_{k\to\infty} \int_{E_k} |(uC_\phi)f_n|^p\,dm = 0.
\end{equation}
On the other hand, since $f_n\circ\phi$ converges to zero uniformly
on $\T\setminus E_k$ as $n\to\infty$ and $u \in H^p$,
\begin{equation}\label{eq:limn}
   \lim_{n\to\infty} \int_{\T\setminus E_k} |(uC_\phi)f_n|^p\,dm = 0.
\end{equation}
Since $uC_\phi$ is bounded below on $M$, we also have
$\|(uC_\phi)f_n\|_{H^p} \geq c$ for all $n$ and some constant
$c > 0$. Using a gliding hump argument based on a repeated
application of \eqref{eq:limk} and \eqref{eq:limn} (akin to
the proof of \cite[Prop.~3.3]{LNST} in the unweighted case),
we may extract a subsequence $(f_{n_j})$ such that the
sequence $\bigl((uC_\phi)f_{n_j}\bigr)$ is equivalent to the
standard basis of $\ell^p$. Since $uC_\phi$ is bounded below
on the closed linear span $[f_{n_j}: j \geq 1] \subset M$, we
conclude that $uC_\phi$ fixes a copy of $\ell^p$ in $M$.
\end{proof}


\end{document}